\documentclass[11pt]{article}

\usepackage[T1]{fontenc}
\usepackage{lmodern}
\usepackage{amsmath,amssymb,amsthm,mathtools}
\usepackage{booktabs}
\usepackage{microtype}
\usepackage[a4paper,margin=28mm]{geometry}
\usepackage{xcolor}
\usepackage{hyperref}
\hypersetup{
  colorlinks=true
}

\newtheorem{theorem}{Theorem}[section]
\newtheorem{lemma}[theorem]{Lemma}
\newtheorem{proposition}[theorem]{Proposition}
\theoremstyle{remark}

\newcommand{\F}{\mathbb F}
\newcommand{\Z}{\mathbb Z}
\newcommand{\ph}{\varphi}
\newcommand{\units}[1]{{(\Z/#1\Z)^\times}}

\title{Many Representations as Sums of Three Prime Cubes}
\author{Yukai Wang and Xu Zhang$^{*}$\\
\footnote{Email addresses:\ 202217794@mail.sdu.edu.cn (Y. Wang),\ xu\_zhang\_sdu@mail.sdu.edu.cn (X. Zhang).
*Corresponding author}
 School of Mathematics and Statistics\\
Shandong University, Weihai 264209, China}
\date{}
\begin{document}
\maketitle

\begin{abstract}
Let $F_k(n)$ be the number of unordered representations
\[
 n=p_1^k+p_2^k+\cdots +p_k^k
\]
by primes, with repetitions allowed.  Erd\H{o}s stated that
$\limsup_{n\to\infty} F_3(n)=\infty$, but his proof appears not to have been published.
A complete unconditional proof is given. The principal input is the classical Hecke equidistribution theorem for the CM Fermat cubic; the rest of the argument uses standard estimates for primes in arithmetic progressions and elementary counting.
The argument used for \(k=3\) does not extend to the case \(k=4\). Nevertheless, by applying the Green--Tao--Ziegler theorem to the linear forms arising from an admissible binary quartic identity, \(\limsup_{n\to\infty}F_4(n)\ge2\) is shown.
\end{abstract}

\section{Introduction}\label{Introduction}

For integers $k\ge2$ and $n\ge1$, let $F_k(n)$ denote the number of unordered
representations
\[
 n=p_1^k+\cdots+p_k^k
\]
by primes, with repetitions allowed. Erd\H{o}s Problem 979 asks whether
\[
 \limsup_{n\to\infty}F_k(n)=\infty
\]
for every $k\ge2$. The case $k=2$ was proved by Erd\H{o}s~\cite{Erdos1937}. Erd\H{o}s and Graham~\cite[p.~47]{ErdosGraham1980} state that the maximum in the cubic case
tends to infinity, but do not include a proof. The maintained Erd\H{o}s
Problems entry~\cite{ErdosProblems979} attributes the cubic case to an
unpublished argument of Erd\H{o}s and lists the general problem as open. Our
first result gives a complete proof of the cubic assertion.

\begin{theorem}[The cubic case]\label{thm:cubic}
There are integers with arbitrarily many representations as sums of three
cubes of primes. Equivalently,
\[
 \limsup_{n\to\infty}F_3(n)=\infty.
\]
\end{theorem}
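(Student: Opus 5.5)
The plan is a pigeonhole argument boosted by a congruence bias. Let $\mathcal P(N)$ be the primes in $(N/2,N]$. There are about $(N/\log N)^3/6$ unordered triples from $\mathcal P(N)$, and their sums $p_1^3+p_2^3+p_3^3$ lie in an interval of length $O(N^3)$. So naive pigeonhole gives on average only $\asymp(\log N)^{-3}$ representations per integer, which is useless. The aim is to win back more than $(\log N)^3$, times an arbitrary constant $K$, by showing that these sums concentrate in a sparse set of residue classes modulo a well-chosen modulus $M$. Pigeonhole inside one such class then produces an $n$ with $F_3(n)\ge K$.

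\textbf{Local bias from Hecke.} Fix a prime $q\equiv1\pmod 3$ and consider the distribution of $x^3+y^3+z^3$ as $x,y,z$ range over $(\Z/q\Z)^\times$. By orthogonality and Gauss/Jacobi sums, the number of solutions of $x^3+y^3+z^3\equiv r$ equals $(q-1)^3/q$ plus a main fluctuation term. For $r$ in a fixed cubic-residue class, this term is of size $\asymp q^{3/2}$ times $\cos(\theta_q+\text{shift})$, where $J=\sqrt q\,e^{i\theta_q}$ is the cubic Jacobi sum. This is exactly the Frobenius trace of the CM Fermat cubic $x^3+y^3=1$ at $q$. Hecke's equidistribution theorem for the Grössencharacter of $\Q(\sqrt{-3})$ says the angles $\theta_q$ are equidistributed. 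By choosing the cubic class of $r$ (there are three choices, which rotate the angle by $2\pi/3$), I can therefore select for every such $q$ a residue $r_q$ with
\[
\#\{(x,y,z)\in((\Z/q\Z)^\times)^3: x^3+y^3+z^3\equiv r_q\}\ \ge\ \frac{(q-1)^3}{q}\Bigl(1+\frac{c}{\sqrt q}\Bigr)
\]
for an absolute $c>0$. In fact this should hold for every $q\equiv 1\pmod 3$ by the three-rotation trick; equidistribution is only needed to keep $c$ uniform, or to restrict to a positive-density subset. The $3$-adic factor (cubes are $\pm1 \bmod 9$) gives an extra constant gain.

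\textbf{Globalising.} Let $M=9\prod q$ over primes $q\equiv1\pmod 3$ with $q\le Q$, and let $r$ be the CRT combination of the $r_q$. Counting triples with $p_i$ in prescribed unit classes $a_i \bmod M$, the number with $\sum a_i^3\equiv r$ is at least $\phi(M)^3M^{-1}\prod_q(1+c/\sqrt q)$. Since $\sum_{q\le Q}q^{-1/2}\gg \sqrt Q/\log Q$, the gain is $\exp(c'\sqrt Q/\log Q)$. Taking $Q\approx(\log\log N)^3$ makes this exceed $K(\log N)^3$, while $M=e^{O(Q)}\le N^{\varepsilon}$. If primes in $\mathcal P(N)$ are equidistributed among classes mod $M$ with relative error $o(1)$, the number of triples with sum $\equiv r \pmod M$ is $\gg (N/\log N)^3M^{-1}\prod(1+c/\sqrt q)$. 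These sums lie in $O(N^3/M)$ integers, so some $n$ receives at least $K$ unordered representations.

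\textbf{Main obstacle.} The hard step is the equidistribution of primes modulo $M$. Here $M$ is far larger than any power of $\log N$, so Siegel--Walfisz does not apply. I would handle it by averaging rather than pointwise: the quantity needed is a sum over all admissible class-triples weighted by the product of local counts, i.e.\ $\sum_{a_1,a_2,a_3}\prod_i\pi(N;M,a_i)\,\mathbf 1[\sum a_i^3\equiv r]$. Expanding the indicator in additive characters mod $M$ and controlling the error by the variance of $\pi(N;M,a)$ over $a$, the contribution of non-principal characters can be bounded on average over $a$ via Bombieri--Vinogradov, or a Barban--Davenport--Halberstam variance bound, which suffices since $M\le N^{\varepsilon}$. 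Possible exceptional Siegel-zero moduli can be avoided by discarding one prime $q$ from $M$, at negligible cost to the gain. If this averaging step resists, the fallback is to shrink the modulus to primes $q$ in a short range, keeping $M\le(\log N)^A$, and compensate by extracting a stronger bias at each $q$ through the equidistribution of $\theta_q$ near its optimal angle.
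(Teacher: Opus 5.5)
\textbf{Main gap: the local-bias step is false as stated.} This step carries the whole argument. Let $\chi$ be a cubic character mod $q$ and $J=J(\chi,\chi)$, so $|J|=\sqrt q$ and $a_q=-2\,\mathrm{Re}\,J$. For $r\not\equiv0\pmod q$ the Gauss-sum expansion gives exactly
\[
 \#\{(x,y,z)\in((\Z/q\Z)^\times)^3:x^3+y^3+z^3\equiv r\}
 =\frac{(q-1)^3+1}{q}+6+3(q+1)\bigl(\chi(r)+\bar\chi(r)\bigr)-2\,\mathrm{Re}\bigl((3\bar\chi(r)+1)J\bigr).
\]
The term $g(\chi)^3+g(\bar\chi)^3=q(J+\bar J)$ is multiplied by $\sum_{t\ne0}e(-tr/q)$. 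That sum is $q-1$ when $r\equiv0$, but $-1$ otherwise.

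So for $r\not\equiv0$ the angle $\theta_q$ enters only at size $O(\sqrt q)$, i.e.\ relative size $q^{-3/2}$. Your $2\pi/3$ rotation is real, but it lives at that negligible scale. The dominant deviation for $r\not\equiv0$ is a relative $6/q$ (cubes) or $-3/q$ (non-cubes). This gives at best $\prod_{q\le Q}(1+6/q)\asymp(\log Q)^3\ll(\log\log M)^3$, which is hopeless against $(\log N)^3$ for any $M\le N^{O(1)}$. This is the same obstruction the paper records for $k=4$.

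A bias of relative size $q^{-1/2}$ exists only for $r\equiv0$. There the count is $(q-1)(q-8-a_q)$ and the bias is about $-a_q/q$, so its sign is dictated by $a_q$ and nothing can be rotated. Hence the bias does \emph{not} hold for every $q\equiv1\pmod 3$. You must take $r_q=0$ and restrict $M$ to primes with $a_q\le-c\sqrt q$. Hecke equidistribution is needed precisely to show these primes have positive density (Proposition~\ref{prop:CM-input}); it is essential, not just a matter of keeping $c$ uniform. With that change, your gain $\exp(c\sqrt Q/\log Q)$ and the choice $Q\approx(\log\log N)^3$ are fine.

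\textbf{Second gap: the analytic step also fails as written.} Bombieri--Vinogradov and Barban--Davenport--Halberstam average over moduli. For a single modulus they give nothing beyond Siegel--Walfisz, i.e.\ only $M\le(\log N)^A$.

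Your fallback cannot work either. By Hasse, $\sigma_q\le1+O(q^{-1/2})$, so if $\log M\le A\log\log N$ the total gain is at most $\exp\bigl(O(\sqrt{\log\log N})\bigr)=(\log N)^{o(1)}$. No selection of angles can produce a ``stronger bias''.

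What does suffice at your size $\log M\asymp(\log\log N)^3$ is the prime number theorem for progressions to moduli up to $\exp(c\sqrt{\log N})$. This follows from the classical zero-free region together with Page's theorem. The single possible exceptional conductor is removed by deleting one of its prime factors from $M$, as you suggest.

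The paper avoids equidistribution entirely. It counts primes up to $X=A^2$ and splits $A$ into $r$ factors balanced in the weight $\sum\ell^{-1/2}$. Using only Brun--Titchmarsh and the prime number theorem, it finds a factor $A_j$ with $(1-\delta)\varphi(A_j)$ classes, each containing $\gg X/(\varphi(A_j)\log X)$ primes. The scaling symmetry of the solutions of $x^3+y^3+z^3\equiv0$ then shows that restricting all coordinates to these classes loses at most a proportion $3\delta$ of the solutions. With both repairs, your outline becomes a valid proof with the same architecture as the paper's.
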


Our second result is a partial result for $k=4$.

\begin{theorem}[A quartic partial result]\label{thm:quartic}
There are infinitely many positive integers having at least two genuinely
distinct unordered representations as sums of four fourth powers of distinct
primes. In particular,
\[
 \limsup_{n\to\infty}F_4(n)\ge2.
\]
\end{theorem}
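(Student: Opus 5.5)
The plan is to deduce Theorem~\ref{thm:quartic} from the Green--Tao--Ziegler theorem on primes in finite-complexity systems of linear forms. The input is a polynomial identity of the shape
\[
 \sum_{i=1}^{4} L_i(x,y)^4=\sum_{i=1}^{4} M_i(x,y)^4 ,
\]
where $L_i,M_i\in\Z[x,y]$ are linear forms. The multisets $\{L_i\}$ and $\{M_i\}$ must not agree up to sign, since that would give only trivial coincidences. If all eight values $L_i(x,y),M_i(x,y)$ are primes, then $n=\sum L_i^4$ has two representations of the required kind.

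\textbf{Step 1 (the identity).} I would look for the identity by writing $L_i=a_ix+b_iy$ and $M_i=c_ix+d_iy$. Comparing the coefficients of $x^{4-j}y^j$ for $j=0,\dots,4$ gives five diophantine equations $\sum_i\binom4j a_i^{4-j}b_i^j=\sum_i\binom4j c_i^{4-j}d_i^j$ in sixteen unknowns. The case where all $a_i=c_i=1$ is a Prouhet--Tarry--Escott problem of size $4$ and degree $4$, which has no nontrivial solutions. So the $x$-coefficients must genuinely vary, and I would run a small computer search over coefficients in a box $|a_i|,\dots\le 10$. If a binary identity proves too rigid, the same argument works verbatim with three variables $x,y,z$, because Green--Tao--Ziegler handles any number of variables.

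\textbf{Step 2 (hypotheses of GTZ).} For the chosen identity I would check two things. First, the eight forms must be pairwise linearly independent, so that the system has finite complexity. Second, the system must be \emph{admissible}: for every prime $p$ there should be $(x,y)\in(\Z/p\Z)^2$ at which no form vanishes. For $p>8$ this holds automatically, because each nonzero form vanishes on only $p$ of the $p^2$ points, so the eight forms together vanish on at most $8p<p^2$ points. Hence only $p\le7$ needs a direct finite check. I would then invoke Green--Tao--Ziegler on a convex cone $K$ where all forms are positive. It gives $\gg N^2/(\log N)^8$ points $(x,y)\in K\cap[1,N]^2$ at which all eight forms take prime values.

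\textbf{Step 3 (distinctness and infinitude).} First I would remove degenerate points. These are the points where two of the $L_i$ coincide, two of the $M_i$ coincide, or some $L_i=\pm M_j$. Each such condition cuts out a line, containing $O(N)$ points, so they are negligible against the main term. On the surviving points the primes within each side are distinct, and the two unordered representations are genuinely different. The resulting values $n$ are unbounded, because $n\ge \max_i L_i(x,y)^4\to\infty$. Hence there are infinitely many such $n$, and $F_4(n)\ge2$ for each of them.

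I expect Step~1 to be the main obstacle. Finding an explicit nontrivial identity between two sums of four fourth powers of linear forms, and then making it admissible at $p=2,3,5,7$, is the real difficulty. Admissibility at $p=2$ is the sharpest constraint, since it requires all eight forms to be odd at a common point. I would try to secure it first by imposing parity conditions on the coefficients inside the search, and if necessary rescale the variables, for example replacing $y$ by $2y$ or $6y$, to kill local obstructions at $2$ and $3$.
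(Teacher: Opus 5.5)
Your architecture matches the paper's: a binary identity, admissibility checked by hand at $p\le 7$ and by the $8p<p^2$ count beyond, pairwise nonproportionality, Green--Tao--Ziegler on a positivity cone, then distinctness. Steps 2--3 are sound as written. Discarding the $O(N)$ lattice points on the lines $L_i=L_{i'}$, $M_j=M_{j'}$, $L_i=\pm M_j$ against the count $\gg N^2/(\log N)^8$ is a legitimate alternative to the paper's device. The paper instead takes the cone $\{y>0,\ 10y<x<11y\}$, on which $0<L_1<L_2<L_3<L_4$, $0<M_1<M_3<M_2<M_4$ and $M_4-L_4=4y>0$, so only the qualitative Proposition~\ref{prop:GTZ} is needed.

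\textbf{The gap is Step~1.} It is the only non-formal part of the argument, and you do not carry it out. Everything afterwards assumes that eight integral binary forms exist that satisfy the five coefficient equations nontrivially, are pairwise nonproportional, and are admissible at $2,3,5,7$. You show the Prouhet--Tarry--Escott shape fails and describe a search with a three-variable fallback. But a search plan is not an existence proof, so as it stands the theorem is not proved.

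The paper closes this gap in Lemma~\ref{lem:identity} with $L=(x-6y,\,4x-y,\,5x+4y,\,6x-5y)$ and $M=(x-4y,\,4x+5y,\,5x-6y,\,6x-y)$. Both have fourth-power sum $2178x^4-2600x^3y+8112x^2y^2-2600xy^3+2178y^4$, and a table checks admissibility at $p\le 7$.

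It is worth seeing why this identity exists. Here $M_i(x,y)=\pm L_{\sigma(i)}(y,x)$ for a permutation $\sigma$, so the identity only says that $\sum_i L_i^4$ is symmetric in $x$ and $y$. If $(|b_i|)$ is a permutation of $(|a_i|)$, your five equations reduce to the single condition $\sum_i a_ib_i(a_i^2-b_i^2)=0$. For the paper's coefficients this sum is $210-60+180-330=0$.

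Separately, your fallback of replacing $y$ by $2y$ or $6y$ to remove local obstructions at $2$ and $3$ cannot work. Such a substitution only restricts the set of value-tuples the forms take. So if every integer point has some form divisible by $p$, the same holds afterwards. Admissibility has to be built into the choice of identity itself.
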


For the proof of Theorem~\ref{thm:cubic}, let $R_3(n)$ count ordered triples
of primes whose cubes sum to $n$. Every unordered representation has between
one and $3!$ orderings, according to whether some of its primes coincide.
Consequently,
\[
 F_3(n)\le R_3(n)\le3!F_3(n).
\]
Thus it is enough to prove that $R_3(n)$ is unbounded.

\paragraph{Notation and standard inputs.}

For a positive integer $m$, let
\[
 \units{m}=\{a\bmod m:(a,m)=1\},
\]
and write $\ph(m)=|\units{m}|$.  We also write
\[
 \pi(X;m,a)=\#\{p\le X:p\text{ prime and }p\equiv a\pmod m\}.
\]
The notation $f\ll g$ or $g\gg f$ means that $|f|\le Cg$ for some constant
$C$, and $f\asymp g$ means that both $f\ll g$ and $g\ll f$.  The dependence
of implied constants on fixed parameters will be indicated when relevant.

The cubic proof uses the prime number theorem, the Chinese remainder theorem, and the Brun--Titchmarsh inequality in the form
\begin{equation}\label{eq:BT}
 \pi(X;q,a)\le\frac{2X}{\ph(q)\log(X/q)}
 \qquad ((a,q)=1,\ q<X),
\end{equation}
as stated, for example, in~\cite[Theorem~6.6]{IwaniecKowalski}.  The only
deeper input in the cubic proof is a consequence of Hecke equidistribution
for a CM elliptic curve, stated precisely in Section~\ref{sec:local}.

The quartic proof uses Green and Tao's qualitative theorem for
finite-complexity systems of linear forms in the primes, specifically
\cite[Corollary~1.9]{GreenTaoLEP}. In that paper the result was conditional on
the Inverse Gowers-norm Conjecture $\mathrm{GI}(s)$ and the M\"obius and
Nilsequences Conjecture $\mathrm{MN}(s)$. Green and Tao subsequently proved
$\mathrm{MN}(s)$ for every $s$ in~\cite{GreenTaoMobius}, while Green, Tao, and
Ziegler proved the remaining cases of $\mathrm{GI}(s)$
in~\cite{GreenTaoZiegler}. Thus the form of
\cite[Corollary~1.9]{GreenTaoLEP} used here is unconditional.

\paragraph{Organization of the article.}
The article is organized as follows. Section~\ref{sec:popular} proves an
elementary lemma concerning primes in residue classes.
Section~\ref{sec:local} obtains the required local solution counts from Hecke
equidistribution for the Fermat cubic. Section~\ref{sec:modulus} constructs a
balanced modulus. Section~\ref{sec:global} combines these ingredients to prove
Theorem~\ref{thm:cubic}. Finally, Section~\ref{sec:quartic} proves Theorem~\ref{thm:quartic} and then explains why the argument used in the cubic case does not extend to $k=4$.

\section{Popular residue classes}\label{sec:popular}

This section establishes an elementary lemma concerning primes in reduced
residue classes.  Given a squarefree modulus written as a product of pairwise
coprime factors, the lemma shows that at least one factor has many primes in
almost all of its reduced residue classes.

\begin{lemma}[Popular residue classes]\label{lem:popular}
Fix $0<\delta<1$ and choose a fixed integer $r$ so large that
\[
 4(1-\delta)^r<\frac18.
\]
Let $A=A_1\cdots A_r$ be squarefree, where the $A_j$ are pairwise coprime, and
put $X=A^2$.  For all sufficiently large $A$, some $A_j$ has a set
\[
 S_j\subseteq\units{A_j},\qquad |S_j|\ge(1-\delta)\ph(A_j),
\]
such that every $a\in S_j$ contains at least
\[
 g_j=\left\lfloor\frac{\eta X}{\ph(A_j)\log X}\right\rfloor,
 \qquad \eta=\frac1{8r},
\]
primes $p\le X$, $p\nmid A$, with $p\equiv a\pmod{A_j}$.
\end{lemma}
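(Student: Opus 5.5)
The plan is to argue by contradiction. Suppose that for every $j$ the conclusion fails. Call a class $a\in\units{A_j}$ \emph{good} if it contains at least $g_j$ primes $p\le X$, $p\nmid A$, with $p\equiv a\pmod{A_j}$, and \emph{bad} otherwise. Let $G_j\subseteq\units{A_j}$ be the set of good classes. Failure of the lemma for $A_j$ means that $S_j=G_j$ is not admissible, so
\[
|G_j|<(1-\delta)\ph(A_j)\qquad\text{for every }j .
\]
The idea is to count the primes $p\le X$ with $p\nmid A$ in two ways. On one hand, most such primes avoid every bad class, because bad classes are sparsely populated. On the other hand, primes avoiding every bad class lie in few classes modulo $A$, and Brun--Titchmarsh caps how many primes each of those classes can hold.

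\textbf{Step 1 (primes in bad classes are few).} For each $j$, the bad classes of $A_j$ contain fewer than $g_j$ primes each, and there are at most $\ph(A_j)$ of them. So they contain at most
\[
g_j\ph(A_j)\le \frac{\eta X}{\log X}
\]
primes in total. Summing over $j=1,\dots,r$ and using $r\eta=1/8$, the set of primes $p\le X$, $p\nmid A$, lying in a bad class for at least one $j$ has size at most $X/(8\log X)$. The prime number theorem gives $\pi(X)\sim X/\log X$, and the number of primes dividing $A$ is at most $\log_2 A = o(X/\log X)$. Hence for $A$ large at least $\tfrac{3}{4}X/\log X$ primes $p\le X$, $p\nmid A$, lie in good classes modulo every $A_j$.

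\textbf{Step 2 (these primes occupy few classes mod $A$).} By the Chinese remainder theorem, $\units{A}\cong\prod_j\units{A_j}$. A prime that is good for every $j$ therefore has residue modulo $A$ in the preimage of $G_1\times\cdots\times G_r$. This set has size
\[
\prod_j|G_j|<(1-\delta)^r\prod_j\ph(A_j)=(1-\delta)^r\ph(A).
\]

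\textbf{Step 3 (Brun--Titchmarsh).} Apply \eqref{eq:BT} with $q=A<X=A^2$. Since $\log(X/A)=\log A=\tfrac12\log X$, each reduced class modulo $A$ contains at most $4X/(\ph(A)\log X)$ primes $p\le X$. Combining this with Step 2, the primes that are good for all $j$ number fewer than
\[
4(1-\delta)^r\frac{X}{\log X}<\frac{X}{8\log X}.
\]
This contradicts the lower bound $\tfrac34 X/\log X$ from Step 1.

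Therefore some $j$ has $|G_j|\ge(1-\delta)\ph(A_j)$, and we take $S_j=G_j$. The main (mild) point to check is the bookkeeping in Step 1: the error terms, namely the primes dividing $A$ and the PNT error, must be absorbed once $A$ is large. This is where "sufficiently large $A$" is used. Note also that $q=A<X$ is exactly the condition needed for \eqref{eq:BT}, and the choice $X=A^2$ is what makes $\log(X/A)\asymp\log X$.
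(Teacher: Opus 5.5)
Your proposal is correct and follows the paper's proof essentially step for step. Both arguments take a union bound over the sparsely populated classes (at most $X/(8\log X)$ primes), use the Chinese remainder theorem to confine the remaining primes to fewer than $(1-\delta)^r\varphi(A)$ reduced classes modulo $A$, apply Brun--Titchmarsh with $q=A=X^{1/2}$, and contradict the prime number theorem. The only cosmetic differences are that you use the integer threshold $g_j$ directly to define good classes, and that you state the prime number theorem lower bound before the Brun--Titchmarsh step rather than after it.
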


\begin{proof}
For each $j$, call a reduced residue class $a\bmod A_j$ \emph{popular} if it
contains at least
\[
 \frac{\eta X}{\ph(A_j)\log X}
\]
primes $p\le X$ with $p\nmid A$. The reduced residue classes modulo $A_j$ partition the primes $p\le X$ with $p\nmid A$. Hence their average occupancy is
\[
\frac{1}{\ph(A_j)}\#\{p\le X:p\text{ prime and }p\nmid A\}
\sim \frac{X}{\ph(A_j)\log X},
\]
by the prime number theorem.

Suppose, for a contradiction, that every $A_j$ has fewer than
$(1-\delta)\ph(A_j)$ popular reduced classes.  We divide the primes $p\le X$,
$p\nmid A$, into two sets.

First consider primes which lie in an unpopular class for at least one
$A_j$.  For a fixed $j$, all unpopular classes together contain fewer than
\[
 \ph(A_j)\frac{\eta X}{\ph(A_j)\log X}
 =\frac{\eta X}{\log X}
\]
such primes.  Summing over $j$ and using $\eta=1/(8r)$ shows that their union
contains at most
\begin{equation}\label{eq:unpopular-primes}
 r\frac{\eta X}{\log X}=\frac{X}{8\log X}
\end{equation}
primes. This is a union bound; a prime lying in several unpopular classes is counted more than once in the upper estimate.

Every remaining prime lies in a popular class modulo every $A_j$.  By the
Chinese remainder theorem,
\[
 \units{A}\cong\prod_{j=1}^r\units{A_j}.
\]
Our contradictory assumption therefore leaves fewer than
\[
 \prod_{j=1}^r(1-\delta)\ph(A_j)=(1-\delta)^r\ph(A)
\]
possible reduced classes modulo $A$ for the remaining primes.  Since
$A=X^{1/2}<X$, the Brun--Titchmarsh bound~\eqref{eq:BT} gives, for each such
class,
\[
 \pi(X;A,a)\le\frac{2X}{\ph(A)\log(X/A)}
 =\frac{4X}{\ph(A)\log X}.
\]
Hence the number of remaining primes is less than
\[
 (1-\delta)^r\ph(A)\frac{4X}{\ph(A)\log X}
 =4(1-\delta)^r\frac{X}{\log X}<\frac{X}{8\log X}.
\]
Together with~\eqref{eq:unpopular-primes}, this would leave fewer than
$X/(4\log X)$ primes $p\le X$ with $p\nmid A$.

This contradicts the prime number theorem.  Indeed, since $X=A^2$, every
prime divisor of $A$ is at most $A=\sqrt X$, and the number of such divisors is at
most
\[
 \frac{\log A}{\log 2}=O(\log X).
\]
Consequently,
\[
 \#\{p\le X:p\text{ prime and }p\nmid A\}
 =\pi(X)-O(\log X)
 \sim\frac{X}{\log X}.
\]
For all sufficiently large $X$, this quantity is greater than
$X/(4\log X)$, contradicting the preceding upper bound.  Hence some $A_j$
has at least $(1-\delta)\ph(A_j)$ popular reduced residue classes.

\end{proof}

\section{The local excess on the Fermat cubic}\label{sec:local}

\subsection{The curve and its Frobenius traces}

Let $E/\mathbb Q$ be the projective Fermat cubic
\[
 E:\quad x^3+y^3+z^3=0.
\]
For every prime $\ell\ne3$, the same equation defines its reduction
$E_\ell/\F_\ell$.  Its set of $\F_\ell$-rational points is
\[
E_\ell(\F_\ell)=\{[x:y:z]\in\mathbb P^2(\F_\ell):x^3+y^3+z^3=0\}.
\]
The partial derivatives $3x^2$, $3y^2$, and $3z^2$ cannot vanish simultaneously at a projective point, so $E_\ell$ is smooth. The genus formula for a smooth plane cubic gives genus one, and $[1:-1:0]\in E(\mathbb Q)$ supplies an origin.  Thus $E$ is an elliptic curve
over $\mathbb Q$ with good reduction at every $\ell\ne3$; see
\cite[Chapter~III, \S1]{SilvermanAEC}.  We write
\begin{equation}\label{eq:frob-trace}
 \#E_\ell(\F_\ell)=\ell+1-a_\ell.
\end{equation}
Here $a_\ell$ is the Frobenius trace.  The Hasse bound
$|a_\ell|\le2\sqrt\ell$ indicates the natural scale on which the point count
fluctuates around $\ell+1$.

The curve $E$ has complex multiplication by the ring of integers
$\mathcal O_K=\Z[(1+\sqrt{-3})/2]$ of $K=\mathbb Q(\sqrt{-3})$; see
\cite[Chapter~II]{SilvermanCM}.  We isolate the precise consequence of Hecke
equidistribution needed below.

\begin{proposition}[CM equidistribution input]\label{prop:CM-input}
There are constants $c,c_0>0$ such that
\begin{equation}\label{eq:favorable-count}
 \#\{\ell\le Y:\ \ell\equiv1\pmod3,\ a_\ell\le-c\sqrt\ell\}
 \ge c_0\frac{Y}{\log Y}
\end{equation}
for all sufficiently large $Y$.
\end{proposition}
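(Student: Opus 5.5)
We plan to deduce the proposition from Hecke's equidistribution theorem for the angles of the Hecke Grössencharacter attached to $E$. For a prime $\ell\equiv1\pmod 3$, $\ell$ splits in $K=\mathbb Q(\sqrt{-3})$ as $\ell=\pi\bar\pi$. By the CM theory of $E$ (Deuring), the Frobenius at $\ell$ corresponds to $\psi(\pi)\in\mathcal O_K$, where $\psi$ is the Grössencharacter of $E$. Moreover, $\psi(\pi)$ is a generator of $(\pi)$ normalized by a congruence condition modulo $3$ (a primary generator). Hence $a_\ell=\psi(\pi)+\overline{\psi(\pi)}=2\sqrt\ell\cos\theta_\ell$, where $\psi(\pi)=\sqrt\ell\,e^{i\theta_\ell}$. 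In fact, for $x^3+y^3+z^3=0$ this is the classical Gauss computation: $a_\ell$ is determined by $4\ell=L^2+27M^2$ with $L\equiv1\pmod3$, up to the precise sign convention. We will not need the exact formula, only that $\theta_\ell$ is the argument of a Hecke character value.

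The key input is Hecke's theorem. The unitary characters $\mathfrak p\mapsto(\psi(\mathfrak p)/|\psi(\mathfrak p)|)^m$, for $m\ne0$, are Grössencharacters of infinity type $m$. Their $L$-functions are entire and nonvanishing on $\mathrm{Re}\,s=1$. By the standard Wiener--Ikehara / prime ideal theorem argument, $\sum_{N\mathfrak p\le Y}(\psi(\mathfrak p)/|\psi(\mathfrak p)|)^m=o(Y/\log Y)$ for each $m\ne0$. By Weyl's criterion, the angles $\theta_\mathfrak p$ of degree-one primes are equidistributed in $[0,2\pi)$. Each split $\ell\le Y$ corresponds to two prime ideals $\pi,\bar\pi$ with conjugate angles, inert primes have norm $\ell^2$, and ramified primes are finite. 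Counting split rational primes $\ell\le Y$ is therefore, up to a factor $2$, counting degree-one prime ideals of norm $\le Y$. Thus the $\theta_\ell$, for $\ell\equiv1\pmod3$, are equidistributed in $[0,\pi]$ with respect to the measure induced from the uniform measure by $\theta\mapsto|\theta|$.

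Now choose $c=1$, say. Then $a_\ell\le-\sqrt\ell$ iff $\cos\theta_\ell\le-1/2$, iff $\theta_\ell\in[2\pi/3,4\pi/3]$ modulo $2\pi$, a set of proportion $1/3$. The indicator of this arc is not continuous, so we approximate it from below by a continuous function $0\le f\le \mathbf 1_{\text{arc}}$ with $\int f\ge1/4$. Equidistribution applied to $f$, together with the prime ideal theorem for $K$ (or Dirichlet's theorem giving $\#\{\ell\le Y:\ell\equiv1\ (3)\}\sim Y/(2\log Y)$), then gives at least $(1/8-o(1))\,Y/\log Y$ primes $\ell\le Y$ with $\ell\equiv1\pmod3$ and $a_\ell\le-\sqrt\ell$. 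Hence $c_0=1/10$ works for large $Y$.

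The main obstacle is making the identification $a_\ell=\mathrm{Tr}\,\psi(\pi)$ rigorous and citing Hecke's nonvanishing on $\mathrm{Re}\,s=1$ for the nontrivial angular characters. We will cite Deuring's theorem (as in \cite{SilvermanCM}) for the first, and Hecke's theorem for the second. One point needs checking: the angular characters $(\psi/|\psi|)^m$ must be genuinely nontrivial for $m\ne0$, so that no trivial character sneaks in. This holds because their infinity type is $z\mapsto(z/|z|)^{m}$, which is nontrivial on $\mathbb C^\times$ modulo the finite unit group once we restrict to the ray class group modulo $9$ where $\psi$ is defined. The units $\mu_6$ force only that $m$ be compatible with the conductor, and the infinity type still prevents triviality.
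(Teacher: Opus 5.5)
Your proposal is correct and is essentially the paper's argument. The paper cites the CM Sato--Tate law for $a_\ell/\sqrt\ell$ on $[-2,2]$, which is the pushforward of Hecke's uniform distribution of the angles, and takes $c=1$ with $I=[-3/2,-1]$; you unpack that law via Hecke $L$-functions and Weyl's criterion and use the arc $\cos\theta_\ell\le-1/2$ with a continuous minorant. Your closing remark about a ray class group ``modulo $9$'' is muddled but harmless: on $(\alpha)$ with $\alpha\equiv1$ modulo the conductor, $(\psi/|\psi|)^m$ equals $(\alpha/|\alpha|)^{\pm m}$, and such $\alpha$ have dense arguments, so nontriviality for $m\ne0$ is immediate from the infinity type.
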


\begin{proof}
For every prime $\ell\ne3$, the condition $\ell\equiv1\pmod3$ is equivalent
to splitting in $\mathbb Q(\sqrt{-3})$.  The CM Sato--Tate theorem states that,
as $\ell$ ranges over these split primes, the normalized traces
\[
 x_\ell=\frac{a_\ell}{\sqrt\ell}
\]
are equidistributed on $[-2,2]$ with respect to the probability measure
\[
 d\mu_{\mathrm{CM}}(x)=\frac{dx}{\pi\sqrt{4-x^2}};
\]
see~\cite[Section~2.4]{SutherlandSatoTate}.  Thus, for every interval
$I\subset(-2,2)$,
\[
 \frac{\#\{\ell\le Y:\ \ell\equiv1\pmod3,\ x_\ell\in I\}}
 {\#\{\ell\le Y:\ \ell\equiv1\pmod3\}}
 \longrightarrow\mu_{\mathrm{CM}}(I).
\]
The prime number theorem in arithmetic progressions gives
\[
 \#\{\ell\le Y:\ \ell\equiv1\pmod3\}
 \sim\frac12\frac{Y}{\log Y}.
\]
Taking $I=[-3/2,-1]$, which has positive $\mu_{\mathrm{CM}}$-measure, we
therefore obtain
\[
 \#\{\ell\le Y:\ \ell\equiv1\pmod3,\ x_\ell\in I\}
 \sim\frac{\mu_{\mathrm{CM}}(I)}2\frac{Y}{\log Y}.
\]
Every prime counted on the left satisfies $a_\ell\le-\sqrt\ell$.  Hence
\eqref{eq:favorable-count} holds with $c=1$ and any sufficiently small
$c_0>0$.
\end{proof}

We call the primes counted in~\eqref{eq:favorable-count} \emph{favorable}.
Only Proposition~\ref{prop:CM-input} from the CM theory will be used below.

\subsection{Counting affine nonzero solutions}

For a favorable prime $\ell$, in particular $\ell\equiv1\pmod3$, define
\[
 N_\ell=\#\{(x,y,z)\in(\F_\ell^\times)^3:x^3+y^3+z^3=0\}.
\]
We pass from the projective point count~\eqref{eq:frob-trace} to this number of
 affine nonzero solutions.

If $x=0$, then a projective solution has $z\ne0$ and
\[
 (y/z)^3=-1.
\]
Because $\F_\ell^\times$ is cyclic of order $\ell-1$ and
$3\mid(\ell-1)$, this equation has exactly three solutions.  The same count
holds when $y=0$ or $z=0$.  These three sets of projective points are disjoint:
two zero coordinates would force the third one to vanish.  Hence there are
exactly nine projective points with a zero coordinate.  The number of
projective points with all coordinates nonzero is therefore
\[
 \#E_\ell(\F_\ell)-9=\ell-8-a_\ell.
\]
Each such projective point has exactly $\ell-1$ nonzero vector
representatives, one for every common scaling factor in $\F_\ell^\times$.
Consequently
\begin{equation}\label{eq:Nell}
 N_\ell=(\ell-1)(\ell-8-a_\ell).
\end{equation}

\subsection{Normalized local density and its excess}

There are $(\ell-1)^3$ triples in $(\F_\ell^\times)^3$.  If the value of
$x^3+y^3+z^3$ behaved uniformly modulo $\ell$, the equation with right-hand
side zero would be expected to hold for a proportion $1/\ell$ of them.  The
corresponding random-model count is $(\ell-1)^3/\ell$.  This motivates the
normalized local density
\begin{equation}\label{eq:sigma-def}
 \sigma_\ell=\frac{\ell N_\ell}{(\ell-1)^3}
 =\frac{\ell(\ell-8-a_\ell)}{(\ell-1)^2}.
\end{equation}
Thus $\sigma_\ell=1$ is the random baseline, whereas $\sigma_\ell>1$ is a
local excess of solutions.

Subtracting the baseline from~\eqref{eq:sigma-def} gives the exact identity
\begin{equation}\label{eq:sigma-minus-one}
 \sigma_\ell-1
 =\frac{\ell(\ell-8-a_\ell)-(\ell-1)^2}{(\ell-1)^2}
 =\frac{-6\ell-a_\ell\ell-1}{(\ell-1)^2}.
\end{equation}
For a favorable prime, $-a_\ell\ell\ge c\ell^{3/2}$.  Thus there are
$Y_0$ and $c_1>0$ such that every favorable prime $\ell\ge Y_0$ satisfies
\begin{equation}\label{eq:sigma-gain}
 \sigma_\ell\ge1+\frac{c_1}{\sqrt\ell}.
\end{equation}

Fix $Y_0$ sufficiently large for~\eqref{eq:sigma-gain}.  Since $Y_0$ is fixed,
\eqref{eq:favorable-count} implies
\[
 \#\{\ell:\ Y_0\le\ell\le Y,\ \ell\text{ favorable}\}\gg\frac{Y}{\log Y}.
\]
As $\ell^{-1/2}\ge Y^{-1/2}$ for every $\ell\le Y$, it follows directly that
\begin{equation}\label{eq:weight-sum}
 \sum_{\substack{Y_0\le\ell\le Y\\\ell\ \mathrm{favorable}}}\ell^{-1/2}\ge\frac1{\sqrt Y}\#\{\ell:\ Y_0\le\ell\le Y,\ \ell\text{ favorable}\}\gg\frac{\sqrt Y}{\log Y}.
\end{equation}
For small $u>0$, $\log(1+u)\ge u/2$.  Enlarging $Y_0$ if necessary and using
\eqref{eq:sigma-gain}, we obtain
\begin{align}
 \log\prod_{\substack{Y_0\le\ell\le Y\\\ell\ \mathrm{favorable}}}\sigma_\ell
 &=\sum_{\substack{Y_0\le\ell\le Y\\\ell\ \mathrm{favorable}}}\log\sigma_\ell\notag\\
 &\ge\frac{c_1}{2}\sum_{\substack{Y_0\le\ell\le Y\\\ell\ \mathrm{favorable}}}\ell^{-1/2}
 \gg\frac{\sqrt Y}{\log Y}.
\end{align}
Hence
\begin{equation}\label{eq:product-gain}
 \prod_{\substack{Y_0\le\ell\le Y\\\ell\ \mathrm{favorable}}}\sigma_\ell
 \ge\exp\!\left(c_2\frac{\sqrt Y}{\log Y}\right)
\end{equation}
for some $c_2>0$.

\section{A balanced modulus}\label{sec:modulus}

From now on fix $\delta=1/12$ and choose the fixed integer $r$ as in
Lemma~\ref{lem:popular}.

Let $\mathcal L(Y)$ be the set of favorable primes in $[Y_0,Y]$ and put
\[
 A=\prod_{\ell\in\mathcal L(Y)}\ell.
\]
We must split this product into the fixed number $r$ of pairwise coprime
factors required by Lemma~\ref{lem:popular}.

The lemma selects one of the factors $A_1,\ldots,A_r$, but does not specify
which one. We must therefore construct the factors so that each $A_j$ has a
large local-density gain. The correct quantity to balance is not the number of prime factors but the weight $\ell^{-1/2}$ that controls $\log\sigma_\ell$.

\begin{lemma}[Greedy balancing]\label{lem:balancing}
Let $w_1,\ldots,w_m>0$, let $W=\sum_iw_i$, and let
$w_{\max}=\max_iw_i$.  The weights can be partitioned into $r$ groups with
group sums $W_1,\ldots,W_r$ satisfying
\[
 W_j\ge\frac Wr-\frac{r-1}{r}w_{\max}\qquad(1\le j\le r).
\]
\end{lemma}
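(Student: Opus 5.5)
We use the greedy algorithm that always adds the next weight to the currently lightest group. Order the weights arbitrarily (sorting is not needed) and process them one at a time. At each step place $w_i$ into a group whose current sum is minimal among the $r$ groups. The bound then follows from one invariant about the final group sums $W_1,\ldots,W_r$:
\[
 \max_j W_j-\min_j W_j\le w_{\max}.
\]

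\textbf{Step 1 (the invariant).} We prove by induction on the number of weights placed that the spread $\max_jW_j-\min_jW_j$ is at most $w_{\max}$. Initially all sums are $0$. Suppose the spread is at most $w_{\max}$ before a step, and the new weight $w\le w_{\max}$ goes to a group with minimal sum $m$. That group's new sum is $m+w\le m+w_{\max}$. The new minimum is at least the old minimum $m$, because only one group changes and its value increases. The new maximum is either the old maximum, which is at most $m+w_{\max}$, or the updated group's sum $m+w$, which is also at most $m+w_{\max}$. In both cases the new spread is at most $(m+w_{\max})-m=w_{\max}$.

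\textbf{Step 2 (deducing the lower bound).} Let $W_{\min}=\min_jW_j$. The other $r-1$ groups each have sum at most $W_{\min}+w_{\max}$, so
\[
 W=\sum_jW_j\le W_{\min}+(r-1)(W_{\min}+w_{\max})=rW_{\min}+(r-1)w_{\max}.
\]
Rearranging gives $W_{\min}\ge W/r-\frac{r-1}{r}w_{\max}$. Since every $W_j\ge W_{\min}$, this is the claimed bound for all $j$.

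The only point needing care is Step 1. One must check that the minimum does not decrease and that the updated group cannot overshoot the maximum by more than $w_{\max}$. Both follow directly from choosing the lightest group. There is no real obstacle; we also note that empty groups are allowed, and in that case the bound is still valid, possibly with a negative right-hand side.
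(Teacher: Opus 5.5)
Your proof is correct and follows the paper's argument: greedy assignment to a currently lightest group, the spread bound $\max_j W_j-\min_j W_j\le w_{\max}$, and the same averaging step to reach $W_{\min}\ge W/r-\frac{r-1}{r}w_{\max}$. The only cosmetic difference is that you prove the spread bound by induction over the placement steps, whereas the paper considers the last weight added to a group with the largest final sum.
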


\begin{proof}
Process the weights one at a time, always assigning the next weight to a group
whose current sum is minimal.  Let $M$ be a group with largest final sum, and
let $w$ be the last weight assigned to $M$.  Write $s$ for the sum of $M$
immediately before $w$ was assigned.  At that moment $M$ had minimal current
sum, so every other group had current sum at least $s$.  Since group sums can
only increase, every other group has final sum at least $s$, whereas the final
sum of $M$ is $s+w\le s+w_{\max}$.  Thus the largest and smallest final group
sums differ by at most $w_{\max}$.

Let $W_{\min}$ be the smallest final group sum.  By the preceding argument, one group has sum $W_{\min}$, and each of the other $r-1$ groups has sum at most
$W_{\min}+w_{\max}$.  Therefore
\[
 W\le W_{\min}+(r-1)(W_{\min}+w_{\max})
 =rW_{\min}+(r-1)w_{\max}.
\]
It follows that
\[
 W_j\ge W_{\min}\ge\frac Wr-\frac{r-1}{r}w_{\max}
\]
for every $j$.
\end{proof}

Apply Lemma~\ref{lem:balancing} with $w_\ell=\ell^{-1/2}$ for
$\ell\in\mathcal L(Y)$.  Let $\mathcal L_1,\ldots,\mathcal L_r$ be the
resulting groups and set
\[
 A_j=\prod_{\ell\in\mathcal L_j}\ell,
 \qquad A=A_1\cdots A_r.
\]
The factors are squarefree and pairwise coprime.  By~\eqref{eq:weight-sum}, the
total weight $W$ tends to infinity and is $\gg\sqrt Y/\log Y$, whereas
$w_{\max}\le Y_0^{-1/2}$ is fixed.  Therefore, for all sufficiently large
$Y$, every group satisfies
\begin{equation}\label{eq:group-weight}
 \sum_{\ell\mid A_j}\ell^{-1/2}\ge\frac{W}{2r}
 \gg\frac{\sqrt Y}{\log Y}.
\end{equation}
Using $\log\sigma_\ell\ge(c_1/2)\ell^{-1/2}$ and
\eqref{eq:group-weight}, we conclude that every $j$ satisfies
\begin{equation}\label{eq:group-gain}
 \prod_{\ell\mid A_j}\sigma_\ell
 \ge\exp\!\left(c_3\frac{\sqrt Y}{\log Y}\right)
\end{equation}
with a constant $c_3>0$ independent of $j$ and $Y$.

Put $X=A^2$.  Lemma~\ref{lem:popular} selects an index $j$ and a set
\[
 S_j\subseteq\units{A_j},\qquad |S_j|\ge(1-\delta)\ph(A_j),
\]
such that every class in $S_j$ contains at least
\[
 g_j=\left\lfloor\frac{\eta X}{\ph(A_j)\log X}\right\rfloor
\]
primes up to $X$ not dividing $A$.  The balancing step is what makes this
selection compatible with the local construction: although $j$ is unknown in
advance, the gain~\eqref{eq:group-gain} holds for every candidate.

\section{From local solutions to prime representations}\label{sec:global}

\subsection{Chinese remaindering of the local densities}

Define
\[
 N(A_j)=\#\{(x,y,z)\in\units{A_j}^3:
 x^3+y^3+z^3\equiv0\pmod{A_j}\}.
\]
Because $A_j$ is a product of distinct favorable primes, the Chinese
remainder theorem identifies a solution modulo $A_j$ with an independent
choice of a solution modulo every $\ell\mid A_j$.  Thus
\[
 N(A_j)=\prod_{\ell\mid A_j}N_\ell.
\]
The same factorization gives
\[
 A_j=\prod_{\ell\mid A_j}\ell,
 \qquad
 \ph(A_j)=\prod_{\ell\mid A_j}(\ell-1).
\]
Combining these identities with~\eqref{eq:sigma-def} yields the exact density
factorization
\begin{equation}\label{eq:CRT-density}
 \frac{A_jN(A_j)}{\ph(A_j)^3}
 =\prod_{\ell\mid A_j}\frac{\ell N_\ell}{(\ell-1)^3}
 =\prod_{\ell\mid A_j}\sigma_\ell.
\end{equation}

\subsection{Restricting all three coordinates to popular classes}

We next show that restricting all three coordinates to $S_j$ discards at most
a proportion $3\delta$ of the congruence solutions. Fix a unit $x\bmod A_j$.  Simultaneous multiplication by $x^{-1}$ gives a bijection
\[
 (x,y,z)\longmapsto(1,yx^{-1},zx^{-1})
\]
from the solutions with first coordinate $x$ to those with first coordinate
$1$.  Hence every first-coordinate fiber has the same size.  There are
$\ph(A_j)$ possible first coordinates and $N(A_j)$ solutions in total, so each
fiber has exactly
\[
 \frac{N(A_j)}{\ph(A_j)}
\]
elements.  By symmetry, the same statement holds for the second and third
coordinates.

At most $\delta\ph(A_j)$ first coordinates lie outside $S_j$.  Since each
first-coordinate fiber contains exactly $N(A_j)/\ph(A_j)$ solutions, the
number of solutions with $x\notin S_j$ is at most
\[
 \delta\ph(A_j)\frac{N(A_j)}{\ph(A_j)}=\delta N(A_j).
\]
The same bound holds for $y\notin S_j$ and for $z\notin S_j$.  A union bound
therefore gives
\begin{equation}\label{eq:popular-solutions}
 \#\{(x,y,z)\in S_j^3:x^3+y^3+z^3\equiv0\pmod{A_j}\}
 \ge(1-3\delta)N(A_j).
\end{equation}

\subsection{Choosing primes and applying the pigeonhole principle}

For every congruence solution counted in~\eqref{eq:popular-solutions}, each of
its three classes contains at least $g_j$ primes up to $X$.  Choosing one prime
from each class produces at least $g_j^3$ ordered prime triples.  We therefore obtain at least
\begin{equation}\label{eq:prime-triples}
 (1-3\delta)N(A_j)g_j^3
\end{equation}
ordered prime triples $(p,q,s)$ with $p,q,s\le X$ satisfying
\[
 p^3+q^3+s^3\equiv0\pmod{A_j}.
\]

Every such sum is positive and at most $3X^3$.  Since it is divisible by
$A_j$, it can take at most
\[
 \left\lfloor\frac{3X^3}{A_j}\right\rfloor\le\frac{3X^3}{A_j}
\]
values.  The pigeonhole principle applied to~\eqref{eq:prime-triples} gives an
integer $n$ for which
\begin{equation}\label{eq:pigeonhole}
 R_3(n)\ge\frac{(1-3\delta)N(A_j)g_j^3A_j}{3X^3}.
\end{equation}

It remains to simplify this bound.  Since $A_j\le A=X^{1/2}$ and $\ph(A_j)\le A_j$,
\[
 \frac{X}{\ph(A_j)\log X}\ge\frac{X^{1/2}}{\log X}\longrightarrow\infty.
\]
The convergence is uniform in $j$. Hence, for all sufficiently large $Y$,
\[
 g_j=\left\lfloor\frac{\eta X}{\ph(A_j)\log X}\right\rfloor
 \ge\frac{\eta X}{2\ph(A_j)\log X}
 \gg\frac{X}{\ph(A_j)\log X}.
\]
Substitution in~\eqref{eq:pigeonhole}, followed by~\eqref{eq:CRT-density}, gives
\begin{align}
 R_3(n)
 &\gg N(A_j)\frac{X^3}{\ph(A_j)^3(\log X)^3}\frac{A_j}{X^3}\notag\\
 &=\frac{A_jN(A_j)}{\ph(A_j)^3(\log X)^3}
 =\frac1{(\log X)^3}\prod_{\ell\mid A_j}\sigma_\ell.
 \label{eq:final-lower-bound}
\end{align}

Finally,
\[
 \log A=\sum_{\ell\in\mathcal L(Y)}\log\ell
 \le\sum_{\ell\le Y}\log\ell=O(Y),
\]
where the last estimate follows, for instance, from the prime number theorem.
Since $X=A^2$, we have $\log X=O(Y)$.  Combining
\eqref{eq:group-gain} and~\eqref{eq:final-lower-bound} yields
\[
 R_3(n)\gg Y^{-3}\exp\!\left(c_3\frac{\sqrt Y}{\log Y}\right),
\]
which tends to infinity as $Y\to\infty$. Thus the construction produces
integers $n$ for which $R_3(n)$ is arbitrarily large. These integers must
themselves be unbounded, since any fixed integer has only finitely many prime
representations. Finally, $F_3(n)\ge R_3(n)/3!$, proving
Theorem~\ref{thm:cubic}.

\section{The quartic case}\label{sec:quartic}

We first prove Theorem~\ref{thm:quartic} by combining an admissible binary quartic identity with the theorem of Green and Tao on linear forms
in the primes~\cite[Corollary~1.9]{GreenTaoLEP}. Then in Subsection~\ref{subsec:quartic-obstruction},
we explain why the local-density argument used in the cubic case fails in the quartic case.

\subsection{An admissible quartic identity}

For integers $x,y$, define
\begin{align*}
 L_1&=x-6y, & L_2&=4x-y, & L_3&=5x+4y, & L_4&=6x-5y,\\
 M_1&=x-4y, & M_2&=4x+5y, & M_3&=5x-6y, & M_4&=6x-y.
\end{align*}

\begin{lemma}[Quartic identity]\label{lem:identity}
One has the polynomial identity
\begin{equation}\label{eq:quartic-identity}
 L_1^4+L_2^4+L_3^4+L_4^4=M_1^4+M_2^4+M_3^4+M_4^4.
\end{equation}
Both sides are equal to
\begin{equation}\label{eq:common-quartic}
 Q(x,y)=2178x^4-2600x^3y+8112x^2y^2-2600xy^3+2178y^4.
\end{equation}
\end{lemma}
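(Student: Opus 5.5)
The plan is to verify \eqref{eq:quartic-identity} by direct coefficient comparison, and to use a symmetry to halve the work. For a linear form $ax+by$ one has
\[
 (ax+by)^4=a^4x^4+4a^3b\,x^3y+6a^2b^2\,x^2y^2+4ab^3\,xy^3+b^4y^4,
\]
so the coefficients of $\sum_i(a_ix+b_iy)^4$ are determined by the five power sums $\sum a_i^4$, $\sum a_i^3b_i$, $\sum a_i^2b_i^2$, $\sum a_ib_i^3$, $\sum b_i^4$. I will therefore only need to compute these sums for the coefficient pairs $(a_i,b_i)$ of the four forms on one side.

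First, for the $L$-side the pairs are $(1,-6),(4,-1),(5,4),(6,-5)$, and I compute the five sums:
\[
 \sum a_i^4=1+256+625+1296=2178,\qquad \sum b_i^4=1296+1+256+625=2178,
\]
\[
 \sum a_i^3b_i=-6-64+500-1080=-650,\qquad \sum a_ib_i^3=-216-4+320-750=-650,
\]
\[
 \sum a_i^2b_i^2=36+16+400+900=1352.
\]
Multiplying by the binomial coefficients $1,4,6,4,1$ gives $2178,-2600,8112,-2600,2178$. Hence $L_1^4+\cdots+L_4^4=Q(x,y)$ with $Q$ as in \eqref{eq:common-quartic}.

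Second, rather than repeating the computation for the $M$-side, I will observe that interchanging $x$ and $y$ in the $L$'s gives
\[
 y-6x=-M_4,\qquad 4y-x=-M_1,\qquad 5y+4x=M_2,\qquad 6y-5x=-M_3 .
\]
Since fourth powers are insensitive to sign, this yields
\[
 M_1^4+M_2^4+M_3^4+M_4^4=\sum_i L_i(y,x)^4=Q(y,x).
\]
Finally, $Q$ is symmetric under $x\leftrightarrow y$ because its coefficient vector $(2178,-2600,8112,-2600,2178)$ is palindromic. Therefore $Q(y,x)=Q(x,y)$, and both sides of \eqref{eq:quartic-identity} equal $Q(x,y)$.

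There is no real obstacle here. The only point requiring care is the arithmetic in the mixed sums $\sum a_i^3b_i$ and $\sum a_ib_i^3$, where sign errors are easy to make. As a safeguard, I will independently compute these two sums on the $M$-side from the pairs $(1,-4),(4,5),(5,-6),(6,-1)$; both come out to $-650$, which confirms the symmetry argument.
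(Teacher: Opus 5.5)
Your proof is correct and is essentially the paper's argument: direct expansion and comparison of coefficients. Your five power sums and the resulting coefficients $2178,-2600,8112,-2600,2178$ all check out, and swapping $x$ and $y$ does send the $L$'s to $\pm$ the $M$'s; together with the palindromic coefficients of $Q$, this neatly replaces the second expansion that the paper carries out directly.
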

\begin{proof}
Expanding the two sums and collecting coefficients shows that each equals the
polynomial $Q(x,y)$ in \eqref{eq:common-quartic}, which proves
\eqref{eq:quartic-identity}.
\end{proof}

\begin{lemma}[Admissibility]\label{lem:admissible}
The system
\[
 \Psi=(L_1,L_2,L_3,L_4,M_1,M_2,M_3,M_4)
\]
is admissible: for every prime $q$ there is $(x,y)\in\F_q^2$ for which none of
the eight values is zero.  Moreover, the eight forms are pairwise
nonproportional over $\mathbb Q$.
\end{lemma}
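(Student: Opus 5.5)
The plan is to treat each linear form as a line through the origin in $\F_q^2$ and count points off the union of these lines. Two preliminary observations come first.

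\emph{Primitivity.} Each of the eight forms has coprime integer coefficients:
\[
(1,-6),\ (4,-1),\ (5,4),\ (6,-5),\ (1,-4),\ (4,5),\ (5,-6),\ (6,-1).
\]
So no form reduces to the zero form modulo any prime $q$. Its zero set in $\F_q^2$ is therefore a line through the origin, containing exactly $q$ points.

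\emph{Nonproportionality.} A form $ax+by$ with $b\ne0$ is determined up to scaling by the slope ratio $a/b$. The eight ratios are
\[
-\tfrac16,\ -4,\ \tfrac54,\ -\tfrac65,\ -\tfrac14,\ \tfrac45,\ -\tfrac56,\ -6.
\]
These are visibly pairwise distinct, so the eight forms are pairwise nonproportional over $\mathbb Q$.

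For large primes I will use a union bound. The union of at most eight lines through the origin in $\F_q^2$ has at most $8(q-1)+1$ points. This is strictly less than $q^2$ as soon as $q^2-8q+7=(q-1)(q-7)>0$, that is, for every $q\ge11$. For such $q$ some $(x,y)$ avoids every zero set, so none of the eight values vanishes.

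The remaining primes $q\in\{2,3,5,7\}$ I will settle by exhibiting an explicit point and reducing the eight values.
\begin{itemize}
\item[$q=2$:] The forms reduce to $x,y,x,y,x,y,x,y$, so $(1,1)$ works.
\item[$q=3$:] The conditions reduce to $x\ne0$, $y\ne0$ and $x\ne y$, so $(1,2)$ works.
\item[$q=5$:] At $(1,2)$ the eight values are $-11,2,13,-4,-7,14,-7,4$, none divisible by $5$.
\item[$q=7$:] At $(1,1)$ the values are $-5,3,9,1,-3,9,-1,5$, none divisible by $7$.
\end{itemize}

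The only real care needed is the threshold in the union bound: it fails exactly at $q=7$, where the bound gives $49$ points, equal to $q^2$. So $q=7$ must be checked by hand rather than covered by the counting argument. Everything else is routine arithmetic.
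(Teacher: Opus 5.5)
Your proof is correct and takes the same route as the paper: explicit witnesses for $q\in\{2,3,5,7\}$, a union-of-lines count for $q\ge 11$, and distinct coefficient ratios for nonproportionality. The differences are cosmetic: you use the sharper bound $8(q-1)+1$ instead of $8q$, you state explicitly that each form is primitive (which the paper tacitly assumes when it says each zero set is a line), and you use $(1,1)$ instead of $(0,1)$ at $q=7$.
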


\begin{proof}
For $q=2,3,5,7$, suitable pairs $(x,y)$ are displayed below.  The last column
lists the eight nonzero residues in the order used in the statement.
\[
\begin{array}{ccl}
\toprule
q&(x,y)&\Psi(x,y)\pmod q\\
\midrule
2&(1,1)&(1,1,1,1,1,1,1,1)\\
3&(1,2)&(1,2,1,2,2,2,2,1)\\
5&(1,2)&(4,2,3,1,3,4,3,4)\\
7&(0,1)&(1,6,4,2,3,5,1,6)\\
\bottomrule
\end{array}
\]
Now let $q\ge11$. The zero set of any one of the eight linear forms is
a line through the origin in $\F_q^2$, and hence contains exactly $q$ points.
Therefore the union of their zero sets contains at most $8q$ points. Since
\[8q<q^2=|\F_q^2|,\]
there is a pair $(x,y)$ outside this union. At that pair, none of the eight forms vanishes. Together with the four explicit checks, this proves admissibility for every prime $q$.

It remains to verify pairwise nonproportionality over $\mathbb Q$. Two forms $ax+by$ and $a'x+b'y$ are proportional if and only if $b/a=b'/a'$. For the eight forms, these ratios are
\[
 -6,\quad-\frac14,\quad\frac45,\quad-\frac56,\quad
 -4,\quad\frac54,\quad-\frac65,\quad-\frac16
\]
respectively. They are all distinct, so no two of the eight forms are proportional.

\end{proof}

\subsection{Prime values of all eight forms}

To prove Theorem~\ref{thm:quartic}, we use the following qualitative
consequence of Green and Tao's theorem~\cite[Corollary~1.9]{GreenTaoLEP}. As explained in
Section~\ref{Introduction}, the theorem was originally conditional on two
conjectures. Subsequent work of Green and Tao~\cite{GreenTaoMobius} and of
Green, Tao, and Ziegler~\cite{GreenTaoZiegler} established those conjectures,
so the form used here is unconditional.

\begin{proposition}[A consequence of Green--Tao--Ziegler] \label{prop:GTZ}
Let $\psi_1,\ldots,\psi_t$ be nonzero integer linear forms in $d$ variables,
no two of which are proportional over $\mathbb Q$. Suppose that for every
prime $q$ there is $\mathbf n\in\F_q^d$ such that
\[
 \psi_1(\mathbf n)\cdots\psi_t(\mathbf n)\ne0.
\]
If $\mathcal C\subset\mathbb R^d$ is a nonempty open convex cone (that is, an open
set invariant under multiplication by positive scalars) on which all the
forms are positive, then there are infinitely many
$\mathbf n\in\mathcal C\cap\mathbb Z^d$ for which
\[
 \psi_1(\mathbf n),\ldots,\psi_t(\mathbf n)
\]
are all prime.
\end{proposition}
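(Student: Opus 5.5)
The plan is to deduce Proposition~\ref{prop:GTZ} from the asymptotic count in \cite[Corollary~1.9]{GreenTaoLEP}, now unconditional. That result says the following. Let $\Psi=(\psi_1,\ldots,\psi_t)$ be a system of affine-linear forms of finite complexity, and let $K\subset[-N,N]^d$ be a convex body. Then
\[
 \sum_{\mathbf n\in K\cap\Z^d}\prod_{i=1}^t\Lambda(\psi_i(\mathbf n))
 =\beta_\infty\prod_{q}\beta_q+o(N^d),
\]
where $\beta_\infty=\mathrm{vol}(K\cap\Psi^{-1}(\mathbb R_{>0}^t))$ and $\beta_q$ is the local factor at $q$. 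The argument has three steps.

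\emph{Step 1: finite complexity.} Homogeneous linear forms in $d$ variables that are pairwise nonproportional over $\mathbb Q$ have finite (Cauchy--Schwarz) complexity in the sense of \cite{GreenTaoLEP}. Indeed, for each $i$ the remaining $t-1$ forms can be split into at most $t-1$ classes, one form per class, so that $\psi_i$ is not in the linear span of any class. A singleton class $\{\psi_j\}$ works, since $\psi_i$ is not a multiple of $\psi_j$. This gives complexity at most $t-2$.

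\emph{Step 2: the singular series is positive.} For each prime $q$,
\[
 \beta_q=\mathbb E_{\mathbf n\in(\Z/q\Z)^d}\prod_i\Lambda_q(\psi_i(\mathbf n)),
 \qquad \Lambda_q(a)=\frac{q}{q-1}\mathbf 1_{q\nmid a}.
\]
This equals $(q/(q-1))^t$ times the proportion of $\mathbf n$ with no $\psi_i(\mathbf n)\equiv0$. By the hypothesis that some $\mathbf n\in\F_q^d$ has $\psi_1(\mathbf n)\cdots\psi_t(\mathbf n)\ne0$, this proportion is positive, so $\beta_q>0$.

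For large $q$ we need $\beta_q=1+O(q^{-2})$, so that the Euler product converges to a positive number. Fix $q$ larger than all the coefficients, so that the forms are nonzero and pairwise nonproportional mod $q$. Each zero set is then a hyperplane of density $1/q$, and pairwise intersections have density $1/q^2$. Inclusion–exclusion gives the proportion $1-t/q+O(t^2/q^2)$, and multiplying by $(1-1/q)^{-t}=1+t/q+O(q^{-2})$ gives $\beta_q=1+O_t(q^{-2})$. Hence $\prod_q\beta_q>0$.

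\emph{Step 3: choose the region and discard prime powers.} Because $\mathcal C$ is open and nonempty, pick a closed ball $B\subset\mathcal C$ and let $K_N=NB$. By homogeneity all forms are positive on $K_N$, so $\beta_\infty=\mathrm{vol}(K_N)\asymp N^d$. The weighted count is therefore $\gg N^d$. Each $\psi_i$ is $\ll N$ on $K_N$, so each weight is at most $(\log N)^{O(1)}$ per point.

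Terms in which some $\psi_i(\mathbf n)$ is a proper prime power $p^k$ with $k\ge2$ must be removed. Fix such a value; the points with $\psi_i(\mathbf n)$ equal to it form a slice containing $\ll N^{d-1}$ lattice points. There are $\ll\sqrt N$ such prime powers, so these terms contribute $\ll N^{d-1/2}(\log N)^{O(1)}=o(N^d)$. The surviving terms have every $\psi_i(\mathbf n)$ prime, and they number $\gg N^d(\log N)^{-t}\to\infty$. Since they lie in the cone $\mathcal C$, this gives infinitely many $\mathbf n\in\mathcal C\cap\Z^d$ as claimed.

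The main obstacle is \textbf{Step 1}: matching the hypotheses exactly to those of \cite[Corollary~1.9]{GreenTaoLEP}. This means checking that pairwise nonproportionality is the "finite complexity" condition used there, and that the normalization there (bounded coefficients, $K$ inside $[-N,N]^d$) is satisfied. Steps 2 and 3 are routine once Step 1 is settled.
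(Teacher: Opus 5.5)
Your proposal is correct and follows the paper's route: the paper gives no separate argument but simply invokes \cite[Corollary~1.9]{GreenTaoLEP}, made unconditional by \cite{GreenTaoMobius,GreenTaoZiegler}, and your Steps~1--3 (finite complexity from pairwise nonproportionality, positivity of the singular series from admissibility, and passage from the weighted count on a dilated ball $NB\subset\mathcal C$ to genuinely prime values) are the routine checks that this citation leaves implicit. One small fix is needed in Step~2: $q$ exceeding all coefficients does not make the forms pairwise nonproportional modulo $q$ (e.g.\ $5L_1\equiv L_3\pmod{17}$ for the forms of Lemma~\ref{lem:identity}), so take instead $q>2H^2$, where $H$ bounds the coefficients, so that a nonzero $2\times2$ minor of each pair survives modulo $q$; the estimate $\beta_q=1+O_t(q^{-2})$ then follows as you wrote.
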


\begin{proof}[Proof of Theorem~\ref{thm:quartic}]
Let
\[
 \mathcal C=\{(x,y)\in\mathbb R^2:y>0,\ 10y<x<11y\}.
\]
This is a nonempty open convex cone. For every $(x,y)\in\mathcal C$, it is straightforward to see
\[
 0<L_1<L_2<L_3<L_4
 \qquad\text{and}\qquad
 0<M_1<M_3<M_2<M_4.
\]
Thus all eight forms are positive on $\mathcal C$, and the four forms in each quadruple take pairwise distinct values there.

Lemma~\ref{lem:admissible} verifies the admissibility and pairwise
nonproportionality hypotheses of
Proposition~\ref{prop:GTZ}. Applying Proposition~\ref{prop:GTZ} to the eight forms therefore gives infinitely many integer pairs
$(x,y)\in\mathcal C$ for which
\[
 L_1(x,y),\ldots,L_4(x,y),M_1(x,y),\ldots,M_4(x,y)
\]
are all prime. For every such pair, Lemma~\ref{lem:identity} gives
\[
 L_1(x,y)^4+\cdots+L_4(x,y)^4
       =M_1(x,y)^4+\cdots+M_4(x,y)^4.
\]
Thus both sides are representations by fourth powers of four distinct
primes. Moreover, the inequalities above show that $L_4(x,y)$ and $M_4(x,y)$
are the largest primes in the respective quadruples, and
\[
 M_4(x,y)-L_4(x,y)=4y>0,
\]
which implies that the two unordered representations are distinct. This proves the
theorem.

\end{proof}

\subsection{Why the cubic argument does not extend}\label{subsec:quartic-obstruction}

The cubic proof ends with the lower bound
\[
 \frac{1}{(\log X)^3}\prod_{\ell\mid A_j}\sigma_\ell
\]
from~\eqref{eq:final-lower-bound}.  Since favorable
primes satisfy $\sigma_\ell\ge 1+c\ell^{-1/2}$, the product of the local
densities grows rapidly enough to overcome the factor $(\log X)^3$; see
\eqref{eq:group-gain}.

We now turn to the case $k=4$.  As in the cubic case, for an odd prime $\ell$, let
\[
 N_{4}(\ell)
 =\#\{(x_1,x_2,x_3,x_4)\in(\F_\ell^\times)^4:x_1^4+x_2^4+x_3^4+x_4^4=0\},
\]
and set
\[
 \sigma_{4}(\ell)=\frac{\ell N_{4}(\ell)}{(\ell-1)^4}.
\]
In contrast with the cubic estimate~\eqref{eq:sigma-gain}, the quartic density
satisfies the following bound.

\begin{lemma}[Quartic local density]\label{lem:quartic-density}
Uniformly over odd primes $\ell$,
\begin{equation}\label{eq:local-ceiling}
 \sigma_4(\ell)=1+O\!\left(\frac1\ell\right).
\end{equation}
\end{lemma}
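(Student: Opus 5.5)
The plan is to count $N_4(\ell)$ with additive characters restricted to nonzero variables, and to exploit that quartic Gauss sums are $O(\sqrt\ell)$, so that the four-fold product is $O(\ell^2)$ instead of the $\ell^{3/2}$ scale that appeared in the cubic case. Write $e(t)=\exp(2\pi i t/\ell)$ and
\[
 S(a)=\sum_{x\in\F_\ell^\times}e(ax^4)\qquad(a\in\F_\ell).
\]
Orthogonality gives
\[
 \ell N_4(\ell)=\sum_{a\in\F_\ell}S(a)^4=(\ell-1)^4+\sum_{a\ne0}S(a)^4 .
\]
Consequently
\[
 \sigma_4(\ell)-1=\frac{1}{(\ell-1)^4}\sum_{a\ne0}S(a)^4 .
\]
So it suffices to prove $\sum_{a\ne0}|S(a)|^4\ll\ell^3$ uniformly in $\ell$.

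The key step is to bound $|S(a)|$ for $a\ne0$. Put $d=\gcd(4,\ell-1)\in\{2,4\}$. Since $x\mapsto x^4$ has the same image in $\F_\ell^\times$ as $x\mapsto x^d$, with each value taken $d$ times,
\[
 S(a)=\sum_{u\ne0}e(au)\sum_{\chi^d=1}\chi(u)=-1+\sum_{\chi^d=1,\ \chi\ne1}\bar\chi(a)\,G(\chi).
\]
Here $G(\chi)$ is the Gauss sum, and $|G(\chi)|=\sqrt\ell$ for nontrivial $\chi$. This gives $|S(a)|\le1+(d-1)\sqrt\ell\le1+3\sqrt\ell$, hence $|S(a)|^4\ll\ell^2$ uniformly. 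Summing over the $\ell-1$ nonzero $a$ yields $\sum_{a\ne0}|S(a)|^4\ll\ell^3$.

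Dividing by $(\ell-1)^4\asymp\ell^4$ (using $\ell\ge3$, so $\ell-1\ge\ell/2$) gives $\sigma_4(\ell)-1=O(1/\ell)$ with an absolute implied constant, which is \eqref{eq:local-ceiling}.

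The only real input is the Gauss sum evaluation $|G(\chi)|=\sqrt\ell$, together with the character decomposition of the number of $d$-th roots. I expect the main point of care to be the bookkeeping of the $x=0$ exclusion and of the trivial character, which together produce the harmless $-1$ in $S(a)$. Optionally I would remark why this yields $O(1/\ell)$ rather than the $\ell^{-1/2}$ excess of the cubic case: with four variables the error term has size $(\sqrt\ell)^4\cdot\ell=\ell^3$ against a main term $\ell^4$, whereas with three variables it has size $\ell^{5/2}$ against $\ell^3$.
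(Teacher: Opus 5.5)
Your proof is correct, but it takes a different route from the paper. The paper argues geometrically: the projective quartic $x_1^4+\cdots+x_4^4=0$ is a smooth K3 surface over $\F_\ell$, so the Grothendieck--Lefschetz trace formula and Deligne's bounds give $\#S_\ell(\F_\ell)=\ell^2+O(\ell)$. It then removes the $O(\ell)$ points on the four coordinate sections, which are plane quartic curves bounded via the Weil bound, and multiplies by the $\ell-1$ scalings to get $N_4(\ell)=\ell^3+O(\ell^2)$. You instead use the diagonal shape directly. Summing the additive characters only over $\F_\ell^\times$ builds the nonzero-coordinate condition in from the start, so no coordinate sections have to be subtracted. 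The decomposition $S(a)=-1+\sum_{\chi^d=1,\ \chi\ne1}\bar\chi(a)G(\chi)$ together with $|G(\chi)|=\sqrt\ell$ then gives the fully explicit bound $|\sigma_4(\ell)-1|\le(1+3\sqrt\ell)^4/(\ell-1)^3$. This is essentially Weil's classical treatment of diagonal equations. It needs nothing beyond the Gauss sum evaluation, whereas the paper invokes the Weil conjectures. The paper's route buys generality: it applies verbatim to any smooth quartic surface, and it makes transparent that the $1/\ell$ saving comes from the middle cohomology having weight $2$. Your route is more elementary and self-contained for this particular diagonal form.
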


\begin{proof}
Let $S_\ell\subset\mathbb P^3_{\F_\ell}$ be the projective surface defined by
\[
 x_1^4+x_2^4+x_3^4+x_4^4=0.
\]
Its partial derivatives are $4x_1^3,\ldots,4x_4^3$.  Since $\ell$ is odd,
they cannot vanish simultaneously at a projective point, so $S_\ell$ is
smooth.

A smooth quartic surface in $\mathbb P^3$ is a K3 surface.  In particular,
its odd cohomology vanishes and its second Betti number is $22$.  The
Grothendieck--Lefschetz trace formula therefore gives
\[
 \#S_\ell(\F_\ell)=1+\ell^2+T_\ell,
\]
where $T_\ell$ is the Frobenius trace on the $22$-dimensional middle
cohomology.  Deligne's bounds~\cite{DeligneWeil} give $|T_\ell|\le22\ell$.
Consequently,
\[
 \#S_\ell(\F_\ell)=\ell^2+O(\ell),
\]
with an absolute implied constant.

This estimate counts all projective points, including those with a zero
coordinate.  Let
\[
 B_\ell=\{[x_1:x_2:x_3:x_4]\in S_\ell(\F_\ell):x_1x_2x_3x_4=0\}.
\]
For each $i$, the coordinate section $S_\ell\cap\{x_i=0\}$ is a smooth plane
quartic curve and hence has genus $3$.  The Weil bound for curves gives
\[
 \#(S_\ell\cap\{x_i=0\})(\F_\ell)=\ell+1+O(\sqrt\ell)=O(\ell).
\]
Taking the union of the four coordinate sections, we obtain
\[
 \#B_\ell=O(\ell).
\]
It follows that the number of points of $S_\ell(\F_\ell)$ with all four
coordinates nonzero is
\[
 \#S_\ell(\F_\ell)-\#B_\ell=\ell^2+O(\ell).
\]

Each such projective point has exactly $\ell-1$ vector representatives in
$(\F_\ell^\times)^4$, and these representatives are precisely the vectors
counted by $N_4(\ell)$.  Consequently,
\[
 N_4(\ell)=(\ell-1)(\ell^2+O(\ell))=\ell^3+O(\ell^2).
\]
Finally,
\[
 \sigma_4(\ell)=\frac{N_4(\ell)}{(\ell-1)^4/\ell}
 =1+O\!\left(\frac1\ell\right),
\]
as required.
\end{proof}

To see what the cubic argument gives in four variables, let $A_j$, $S_j$,
and $g_j$ be supplied by Lemma~\ref{lem:popular}, and set
\[
 N_4(A_j)=\#\{(x_1,x_2,x_3,x_4)\in\units{A_j}^4:x_1^4+x_2^4+x_3^4+x_4^4\equiv0\pmod{A_j}\}.
\]
Since $A_j$ is squarefree, the Chinese remainder theorem gives
\[
 N_4(A_j)=\prod_{\ell\mid A_j}N_4(\ell).
\]
Using also
\[
 A_j=\prod_{\ell\mid A_j}\ell,
 \qquad
 \ph(A_j)=\prod_{\ell\mid A_j}(\ell-1),
\]
we obtain
\begin{equation}\label{eq:global-local-ceiling}
 \frac{A_j N_4(A_j)}{\ph(A_j)^4}
 =\prod_{\ell\mid A_j}\frac{\ell N_4(\ell)}{(\ell-1)^4}
 =\prod_{\ell\mid A_j}\sigma_4(\ell).
\end{equation}

The fiber argument used in the cubic case applies to each of the four
coordinates.  Restricting all four coordinates to $S_j$ therefore leaves at
least
\[
 (1-4\delta)N_4(A_j)
\]
solutions modulo $A_j$.  Since $\delta=1/12$, this is a positive proportion.
Repeating the prime-selection and pigeonhole argument leading
to~\eqref{eq:pigeonhole}, we obtain some $n$ for which
\begin{equation}\label{eq:quartic-pigeonhole}
 R_4(n)\ge\frac{(1-4\delta)N_4(A_j)g_j^4A_j}{4X^4}.
\end{equation}
Here $R_4(n)$ counts ordered representations of $n$ as a sum of four fourth
powers of primes, so $F_4(n)\ge R_4(n)/4!$.  As before, uniformly in $j$,
\[
 g_j\gg\frac{X}{\ph(A_j)\log X}.
\]
Together with~\eqref{eq:global-local-ceiling}, this gives
\begin{equation}\label{eq:quartic-analogue-bound}
 R_4(n)
 \gg\frac{A_jN_4(A_j)}{\ph(A_j)^4(\log X)^4}
 =\frac{1}{(\log X)^4}\prod_{\ell\mid A_j}\sigma_4(\ell).
\end{equation}

On the other hand, Lemma~\ref{lem:quartic-density} implies that, for some absolute constant $C>0$,
\[
 \prod_{\ell\mid A_j}\sigma_4(\ell)
 \ll\prod_{\ell\mid A_j}\left(1+\frac C\ell\right)
 \ll(\log\log A_j)^C,
\]
where the last estimate is a standard consequence of Mertens' theorem. Thus
\begin{equation}\label{eq:quartic-method-bound}
 \frac{1}{(\log X)^4}\prod_{\ell\mid A_j}\sigma_4(\ell)
 \ll\frac{(\log\log A_j)^C}{(\log X)^4}
 \le\frac{(\log\log A)^C}{(2\log A)^4}.
\end{equation}
Here $X=A^2$ and $A_j\le A$.  The last expression tends to zero as
$A\to\infty$, so the quartic analogue of the cubic argument cannot prove that \(F_4(n)\) is unbounded.

\end{document}